\newtheorem{theorem}{Theorem}[section]
\newtheorem{lemma}[theorem]{Lemma}
\newtheorem{remark}[theorem]{Remark}
\newcommand{\ud}{\mathrm{d}}
\newcommand{\Dim}{\mbox{dim}\mspace{3mu}}
\title{Explicit Bound for Quadratic Lagrange Interpolation Constant on Triangular Finite Elements}
\author{Xuefeng LIU\footnote{Graduate School of Science and Technology, Niigata University, 8050 Ikarashi 2-no-cho,
Nishi-ku, Niigata City, Niigata 950-2181 Japan. E-mail: xfliu@math.sc.niigata-u.ac.jp (Corresponding author)}, Chun'guang YOU \footnote{LSEC, ICMSEC, Academy of Mathematics and Systems Science, Chinese Academy of
Sciences, Beijing 100190, China. E-mail: youchg@lsec.cc.ac.cn} \thanks{This work is supported by Japan Society for the Promotion of Science, Grand-in-Aid for Young Scientist (B) 26800090 and
Grant-in-Aid for Scientific Research (B) 16H03950.}
}
\begin{document}

\maketitle

\begin{abstract}
%% Text of abstract
For the quadratic Lagrange interpolation function, an algorithm is proposed to provide explicit and
verified bound for the interpolation error constant that appears in the interpolation error estimation.
The upper bound for the interpolation constant is obtained by solving an eigenvalue problem along with
explicit lower bound for its eigenvalues.
The lower bound for interpolation constant can be easily obtained by applying the Rayleigh-Ritz method.
Numerical computation is performed to demonstrate the sharpness of lower and upper bounds of the
interpolation constants over triangles of different shapes. An online computing demo is available at 
\url{http://www.xfliu.org/onlinelab/}.

\end{abstract}

{\bf Keywords }
%-----------------------------------------------------------------------------------------------------
Lagrange interpolation error constant, Eigenvalue problem, Finite element method, Verified computation
%-----------------------------------------------------------------------------------------------------

{\bf 2010 MSC } 35P15,  65N30,  65N25,  65N15

\section{Introduction}

In this paper we aim to provide explicit error estimation for quadratic interpolation operator
$\Pi_2$ defined for functions over triangular elements. Given a triangle $T$, denote the three vertices by
$p_1$, $p_2$, $p_3$, and the mid-points by $p_{12}$, $p_{23}$, $p_{31}$; see Figure \ref{fig:tri}.

\begin{figure}[ht]
\begin{center}
\includegraphics[height=1.3in]{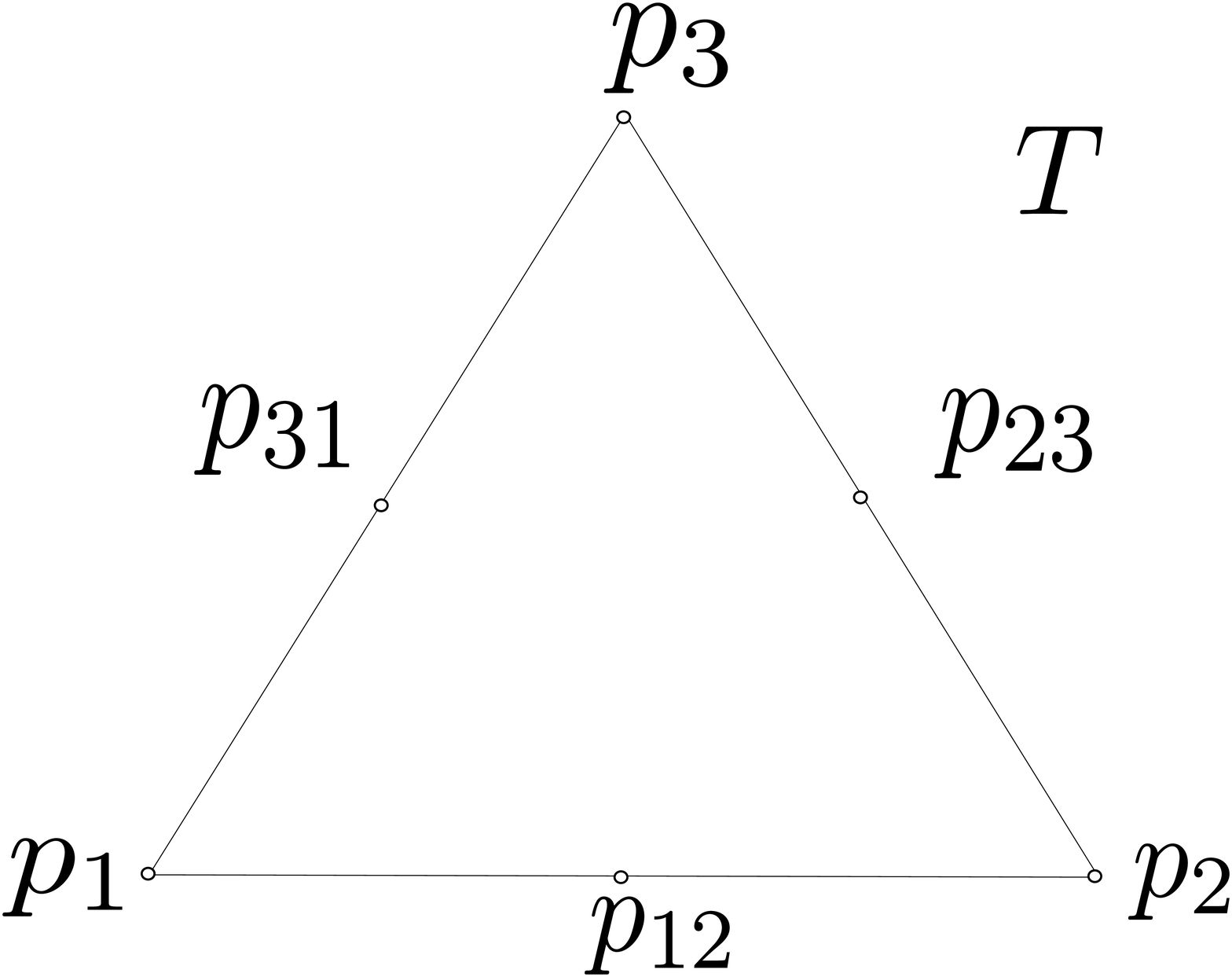}
\caption{\label{fig:tri} Configuration of a triangle element $T$}
\end{center}
\end{figure}

The finite element method (FEM) defined on a triangulation of domains is often used to solve 
partial differential equations, for example, the model problem of Poisson's equation.
In many cases, one can only make sure the $H^2$-regularity of the solution $u$, 
i.e., $u\in H^2(\Omega)$, and the first and second order Lagrange interpolations $\Pi_{1}$ and $\Pi_{2}$ are often used to 
give error estimation for FEM solutions; see, e.g., \cite{Ciarlet2002,Brenner+Scott2008}.

\paragraph{$\Pi_{1}$ interpolation}
Let us first review the definition of $\Pi_{1}$: 
$\Pi_{1}u$ is a linear function that interpolates $u\in H^{2}(T)$  at vertices of $T$, i.e.,
\begin{equation}
\label{eq:def-pi1}
(\Pi_1 u)(p_i) - u (p_i)=0, \quad i =1,2,3\:.
\end{equation}
One of the interpolation error estimation for $\Pi_{1}$ is given by
\begin{equation}
\label{eq:error-est-pi1}
|u - \Pi_1 u|_{1,T} \le \widetilde{C}_T |u|_{2,T}, \,\,\forall\, u\in H^2(T)\:.
\end{equation}
The definition of  $|\cdot|_{i}$ ($i=0,1,2,\cdots$) is taken from the notation of
Sobolev spaces; see \S\ref{preliminaries}.
The explicit bound of constant $\widetilde{C}_{T}$ dates back to the work of 
Natterer \cite{Natterer1975} and Lehmann \cite{R.Lehmann1986}, while recent work can be found in 
Kikuchi-Liu \cite{liu-kikuchi-2010,Kikuchi+Liu2007} and Kobayashi \cite{Kobayashi2011}, etc.
A sharp bound for the interpolation error constant $\widetilde{C}_{T}$ on concrete elements is given as follows (see, e.g., \cite{kobyashi-2015}),
\begin{equation}
\left\{
\begin{array}{ll}
\widetilde{C}_T \le 0.4889 & \!\!\!\!\mbox{ for unit isosceles right triangle; } \\
\widetilde{C}_T \le 0.3186 & \!\!\!\!\mbox{ for unit regular triangle. }
\end{array}
\right.
\end{equation}

\vskip 0.2cm

\paragraph{$\Pi_{2}$ interpolation}
 The second order Lagrange interpolation $\Pi_2$ for $u\in H^2(T)$ is a quadratic polynomial satisfying
\begin{equation}
\label{eq:def-pi2}
(\Pi_2 u)(p_i) - u (p_i)=0, \quad i \in I_0,
\end{equation}
where $I_0$ is a set of indices given by
$I_0:=\{1,2,3,\{12\},\{23\},\{31\}\}$. 
The error estimation for $\Pi_{2}u$ is given as follows,
\begin{equation}
\label{eq:error-est-pi2}
|u - \Pi_2 u|_{1,T} \le C_T |u|_{2,T}, \,\,\forall\, u\in H^2(T)\:.
\end{equation}
Here, the interpolation constant $C_T$ only depends on the shape of triangle $T$ itself.
For $\Pi_{2}$ and even general $k$th order Lagrange interpolation, 
there have been various literatures to investigate the dependence of interpolation error on triangle shapes;
see, e.g., the early work of Jamet \cite{jamet1976estimations} and recent work of Kobayashi-Tsuchiya \cite{kobayashi2015priori}.

To the author's best knowledge, there is no result reported for the upper bound of the constant $C_T$.
In this paper, we will propose a method that gives explicit bound for $C_T$ by
solving eigenvalue problem of corresponding differential operators.
As shown in \Cref{table:constant} of \S\ref{sec:numerical-results}, we have, for example, 
\begin{equation}
\left\{
\begin{array}{ll}
C_T \le 0.2598  & \!\!\!\!\mbox{ for unit isosceles right triangle; } \\
C_T \le 0.1777  & \!\!\!\!\mbox{ for unit regular triangle. }
\end{array}
\right.    
\end{equation}

Compared with the bound for $\widetilde{C}_{T}$, one can see 
that $\Pi_{2}$ has more accurate error estimation than the linear interpolation $\Pi_{1}$. \\

The method proposed in this paper can also be applied to interpolation for $u$ with higher regularity.
For example, for $u \in H^{3}(T)$, the following error estimation holds,
\begin{equation}
\label{eq:error-est-pi2-known}
|u - \Pi_2 u|_{i,T} \le C_{T,i}  |u|_{3,T}, \,\,\forall\, u\in H^3(T) \quad (i=0,1,2)\:.
\end{equation}
The sharp and explicit values of the interpolation constants $C_{T,i}$ are rarely reported. 
A brief discussion for bounding such constants can be found in \S5. \\

\vskip 0.2cm

The structure of the rest of the paper is as follows. In \S\ref{preliminaries}, the characterization of constant $C_T$ through minimization problem and
eigenvalue problems is given, where we can see that to give a lower bound of $C_T$ is quite easy.
In \S\ref{sec:lower-bound}, the upper bound of $C_T$ is discussed through the lower bound of certain eigenvalue, which is main contribution of our paper.
In \S\ref{sec:numerical-results}, the computed lower and upper bound for $C_T$ are given for $T$ with various shapes.
Finally in \S\ref{sec:summary}, we provide a rough idea to bound the constants in \cref{eq:error-est-pi2-known} and point out the future work.

\section{Preliminaries}\label{preliminaries}
In this paper, the domain $\Omega$ of functions is selected as the triangle $T$.
The standard notation is used for Sobolev function spaces $W^{k,p}(\Omega)$.
The associated norms and semi-norms are denoted by $\|\cdot\|_{k,p,\Omega}$, $|\cdot|_{k,p,\Omega}$, respectively
(see, e.g., Chapter 1 of \cite{Brenner+Scott2008} and Chapter 1 of \cite{Ciarlet2002}).
Particularly, for special $k$ and $p$, we use abbreviated notation as $H^k(\Omega)=W^{k,2}(\Omega)$, $|\cdot|_{k,\Omega} = |\cdot|_{k,2,\Omega}$,
$L^p(\Omega)=W^{0,p}(\Omega)$.
The set of polynomials over $T$ of up to degree $k$ is denoted by $P_k(T)$.

The following subspace $V_0$ of $H^2(T)$ will play an important role in bounding the interpolation constant $C_T$.
\begin{equation}
\label{eq:def-V-initial}
V_0:= \{ v \in H^2(T) \:|\ v(p_i)=0, i=1,2,3 \}\:.
\end{equation}
The intersection of $V_0$ and $P_{k}(T)$ is denoted by
\begin{equation}
\label{def:polynomial-conforming-space}
P_{k,0}(T): = V_0 \cap P_{k}(T)=\{ v \in P_k(T) ~|~ v(p_i) =0, i=1,2,3 \}\:.
\end{equation}
Define bilinear forms $M$, $N$ as follows,
\begin{equation}
\label{eq:def-M-N}
M(u,v) := \int_{T} D^2 u \cdot D^2 v \:\: \mbox{d}T, \quad
N(u,v) := \int_{T} \nabla (u - \Pi_2 u)  \cdot \nabla (v - \Pi_2 v) \:\: \mbox{d}T\:.
\end{equation}
Here, $D^2$ is the second order derivative of $u$, i.e.,
$$
D^2u:=(\frac{\partial^2 u}{\partial x^2}, \frac{\partial^2 u}{\partial x \partial y}, \frac{\partial^2 u}{\partial y \partial x},  \frac{\partial^2u}{\partial y^2})\:.
$$
It is easy to see that $M$ is an inner product of $V_0$ and $N$ is a positive semi-definite bilinear form of $V_0$.

Define Rayleigh quotient $R$ for $u  \in H^2(T)$,
\begin{equation}
\label{eq:rayleigh-quotient}
R(u):=\frac{|u|^2_{2,T}}{|u-\Pi_2u|^2_{1,T}} = \frac{M(u,u)}{N(u,u)} \:.
\end{equation}
Let $\Pi_{1} u $ be the linear Lagrange interpolation of $u \in H^2(T)$. 
Since $(u-\Pi_1 u)(p_i)=0, (i=1,2,3)$, we have $(u-\Pi_1 u) \in V_{0}$.
Noticing that $R(u)=R(u-\Pi_1u)$, the optimal constant $C_T$ in \cref{eq:error-est-pi2} can be defined through the quantity $\overline{\lambda}$,
\begin{equation}
\label{eq:minimizer}
C_T^{-2}  := \overline{\lambda} = \inf_{ u \in H^2(T), |u-\Pi_2u|_{1,T}\not=0} R(u)
= \inf_{ u \in V_0, |u-\Pi_2u|_{1,T}\not=0} R(u) \:.
\end{equation}
The existence of minimizer of \cref{eq:minimizer} can be proved by the standard compactness argument.
Thus the infimum above is actually a minimum, which corresponds to the smallest eigenvalue of certain eigenvalue problem.

\paragraph{Upper bound for $\overline{\lambda}$}
By using the Rayleigh-Ritz method, it is easy to
give an upper bound for $\overline{\lambda}$, i.e., the lower bound for the interpolation constant.
The Rayleigh-Ritz bound $\overline{\lambda}_{upper}$ along with the usage of $P_{k,0}(T)$ is given by
$$
\overline{\lambda}_{upper} := \min_{ u \in P_{k,0}(T), |u-\Pi_2u|_{1,T}\not=0 } R(u)\:.
$$
Quantity $\overline{\lambda}_{upper}$ can be calculated by solving a matrix eigenvalue problem.
Take a basis of
$P_{k,0}(T)$ as $\{ \phi_i \}_{i=1}^n$, $n=\mbox{dim}(P_{k,0}(T))$. Define matrix $A$ and $B$ by
$$
A=\Big( M(\phi_i,\phi_j)\Big)_{i,j=1,\cdots, n}, \quad
B=\Big( N(\phi_i,\phi_j)\Big)_{i,j=1,\cdots, n}.
$$
Notice that $A$ is positive definite and $B$ is positive semi-definite.
The value of $\overline{\lambda}_{upper}^{\:\:-1}$ is given by the maximum eigenvalue of the problem
$$
Bx=\mu Ax\:.
$$
The calculated upper bounds of the $\overline{\lambda}_{upper}$, i.e., the lower bounds of interpolation constant, are shown in \Cref{sec:numerical-results},
where the poloynomial functions in $P_{6,0}(T)$ are utilized.

\paragraph{Preparation for lower bound of $\overline{\lambda}$}

Below, we show the important property of the minimizer of (\ref{eq:minimizer}).
\begin{lemma} \label{lem:minimizer} The minimizer $u_0$ of (\ref{eq:minimizer}) is orthogonal to $P_2(T)$ with respect to $M(\cdot, \cdot)$.
\end{lemma}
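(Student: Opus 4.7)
The plan is to exploit the Euler--Lagrange equation associated to the Rayleigh quotient minimization and then decompose an arbitrary $p \in P_2(T)$ into a linear part (killed by the Hessian) and a quadratic part that vanishes at the vertices (which lies in $V_0$).

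First, I would observe that since the infimum in \cref{eq:minimizer} is attained at some $u_0 \in V_0$, standard variational calculus on the quotient $R(u) = M(u,u)/N(u,u)$ yields the stationarity condition
\begin{equation*}
M(u_0, v) = \overline{\lambda}\, N(u_0, v) \quad \text{for all } v \in V_0.
\end{equation*}
This is the identity I will apply, and no compactness or regularity beyond the existence of $u_0$ (already noted in the text) is required.

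Next, I would fix an arbitrary $p \in P_2(T)$ and write $p = \Pi_1 p + (p - \Pi_1 p)$, where $\Pi_1 p \in P_1(T)$ is the linear Lagrange interpolant of $p$. The remainder $q := p - \Pi_1 p$ is a quadratic that vanishes at the three vertices, hence $q \in P_{2,0}(T) \subset V_0$ by \cref{def:polynomial-conforming-space}. Now I would handle the two pieces separately. For $\Pi_1 p$, its second derivative vanishes identically, so $M(u_0, \Pi_1 p) = 0$ directly from the definition of $M$ in \cref{eq:def-M-N}. For $q \in P_{2,0}(T) \subset V_0$, the Euler--Lagrange identity gives $M(u_0, q) = \overline{\lambda}\, N(u_0, q)$. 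Since $q \in P_2(T)$, the defining interpolation property \cref{eq:def-pi2} together with $\Pi_2$ being a polynomial of degree $\le 2$ yields $\Pi_2 q = q$, so the integrand in $N(u_0, q)$ contains the factor $\nabla(q - \Pi_2 q) \equiv 0$, hence $N(u_0, q) = 0$ and therefore $M(u_0, q) = 0$.

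Adding the two contributions gives $M(u_0, p) = 0$ for every $p \in P_2(T)$, which is precisely the claimed $M$-orthogonality. There is no real obstacle here: the only subtlety worth double-checking is that $\Pi_2$ does reproduce all of $P_2(T)$ (so $N$ annihilates any quadratic in its second slot) and that the decomposition $p = \Pi_1 p + (p - \Pi_1 p)$ genuinely lands in $P_1(T) \oplus P_{2,0}(T)$; both are immediate from the definitions in \cref{eq:def-pi1,eq:def-pi2,def:polynomial-conforming-space}.
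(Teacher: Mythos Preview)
Your proof is correct, and the core idea---first-order optimality of the Rayleigh quotient combined with the fact that $\Pi_2$ reproduces $P_2(T)$---is exactly what the paper uses. The only difference is that the paper perturbs $u_0$ directly by an arbitrary $q\in P_2(T)$, using that $u_0$ minimizes $R$ over all of $H^2(T)$ (not just $V_0$); this yields $M(u_0,q)=\overline{\lambda}\,N(u_0,q)=0$ in one stroke, with no decomposition needed. Your route restricts the stationarity condition to test functions in $V_0$, which forces you to split $p=\Pi_1 p + (p-\Pi_1 p)$ and handle the linear part separately via $D^2(\Pi_1 p)=0$. That extra step is harmless and your argument has the minor conceptual advantage of relying only on the constrained minimization over $V_0$, but since the paper already records that the infima over $H^2(T)$ and $V_0$ coincide, the paper's shortcut is available and slightly cleaner.
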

\begin{proof}
Suppose $u_0$ minimizes $R(u)$ for all $u$ satisfying $|u-\Pi_2u|_{1,T}\not=0$.
For any $q\in P_2(T)$, define $g(t):= R(u_0+tq)$, then ${g'(t)}|_{t=0}=0$. That is,
$$
M(u_{0},q)=\overline{\lambda} N(u_{0}, q) 
$$
Noticing that $q-\Pi_{2}q=0$ and hence $N(u_0,q)= 0$, we have  $M(u_0,q)=0$.
\end{proof}

Let us introduce the complement space of $P_{2,0}(T)$ in $V_0$, with respect to the inner product $M(\cdot, \cdot)$.
\begin{equation}
\label{eq:def-W}
W:=\{ v \in V_0 ~|~ M(v,p)=0, ~~\forall p \in P_{2,0}(T) \}\:.
\end{equation}
In space $W$, $N$ becomes a positive definite bilinear form and thus can be regarded as an inner product of $W$.
From \Cref{lem:minimizer}, it is easy to see that the quantity $\overline{\lambda}$ can be characterized by
\begin{equation}
\label{eq:minimizer_with_W}
\overline{\lambda} = \inf_{ u \in W, u \not=0} R(u) \:.
\end{equation}

%
% Denote the corresponding norms of $W$ as
% $\|u\|_M := |u|_{2,T}$,  $\|u\|_N := \sqrt{N(u,u)}$, respectively.

\paragraph{Eigenvalue problem for $\overline{\lambda}$}
The quantity $\overline{\lambda}$ is in fact the minimal eigenvalue of the following eigenvalue problem:
Find $(\lambda, u)  \in \mathbb{R} \times W$ such that
\begin{equation}
\label{eq:eig-for-constant}
M(u,v)=\lambda N(u,v), \quad \forall v \in {W}\:.
\end{equation}
Such an eigenvalue problem has the eigenvalues distributed as $0 < \lambda_1 \le \lambda_2 \le \cdots$.
The quantity $\overline{\lambda}$ is just given by $\lambda_1$.

%%%%%%%%%%%%%%%%%%%%%%%%%%%%%%%%%%%%%%%%%%%%%%%%%%%%%%%%%%%%%%%%%%%%%%%%%%%%
\section{Lower bound for the eigenvalue $\overline{\lambda}$}
\label{sec:lower-bound}
%%%%%%%%%%%%%%%%%%%%%%%%%%%%%%%%%%%%%%%%%%%%%%%%%%%%%%%%%%%%%%%%%%%%%%%%%%%%

The eigenvalue problem in (\ref{eq:eig-for-constant}) can be solved approximately by using finite element method (FEM).
Generally, it is difficult to give explicit error estimation for the computed approximation of eigenvalues.
Recently, Liu \cite{Liu-2015} proposed a framework to calculate explicit eigenvalue bounds for compact self-adjoint differential operators.
Such a framework can be implemented with the usage of finite element method.

In this section, we first introduce the finite element method to be used for approximate eigenvalue computation.
Then we apply the framework of  \cite{Liu-2015} to bound the eigenvalue corresponding to the interpolation constant.

\subsection{Finite element approximation for eigenvalues}
%We utilize the finite element method spaces to construct $V^h$ needed in \Cref{liu-amc-main}.
%
Take $T$ as the domain and $\mathcal{T}^h$ as a triangular subdivision of $T$.
For each element $K$ of $\mathcal{T}^h$, denote by $h_K$ the longest edge length of $K$ and
the mesh size $h$ is defined by
\begin{equation}
\label{def:mesh-size}
h:=\max_{K \in \mathcal{T}^h} h_K \:.
\end{equation}
Particularly, we require that the triangulation $\mathcal{T}^h$ for $\Omega(=T)$ has $3$ mesh nodes that
coincide with three mid-points $p_{12}$, $p_{23}$ and $p_{31}$ (see \Cref{fig:tri_mesh_special}), which can be easily done by
performing bisections for the original triangle $T$. With such kind of mesh,
the interpolation error estimation needed in bounding the eigenvalue will be much easier.

\begin{figure}[ht]
\begin{center}
\includegraphics[width=1.8in]{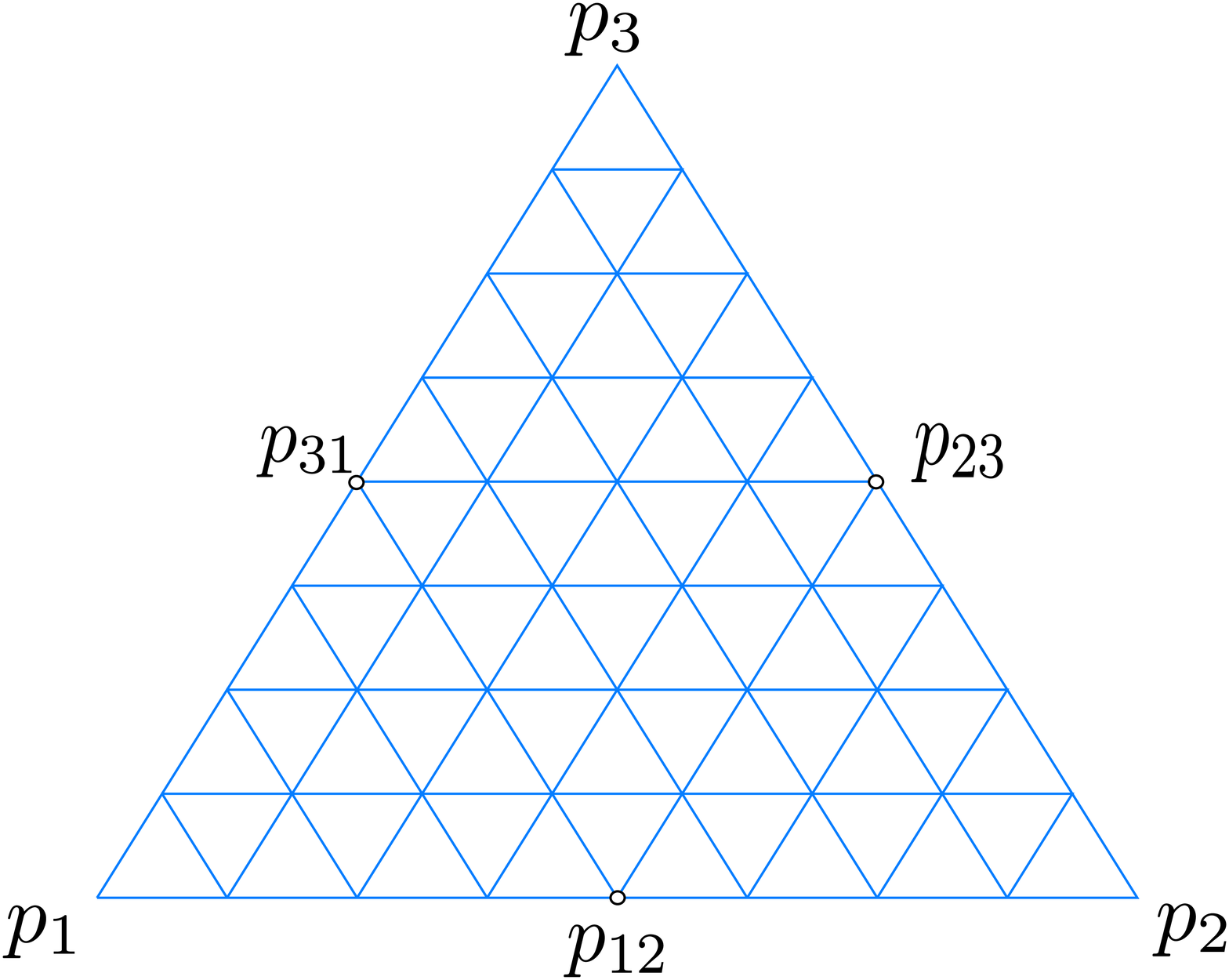}
\caption{\label{fig:tri_mesh_special} Special triangulation needed for lower eigenvalue bound computation}
\end{center}
\end{figure}

Define the Fujino-Morley FEM space $V_{FM}^h$ over $\mathcal{T}^h$ \cite{morley1968triangular,Fujino1971}.
\begin{equation}\label{fujino-morley-fem}
\begin{aligned}
V_{FM}^h & := \left\{v_h \: | \right.
     v_h|_K\in P_2(K) \mbox{ for each } K\in \mathcal{T}^h;  v_h \mbox{ is continuous at the nodes;} \\
    & \int_{e}  \left. \frac{\partial v_h}{\partial \vec{n}} \right|_{K_1} -
    \left. \frac{\partial v_h }{\partial \vec{n}} \right|_{K_2} \mbox{d}s = 0
    \left. \mbox{ for } e = K_1 \cap K_2; v_h(p_i) =0, i=1,2,3\right\}\:. 
\end{aligned}
\end{equation}
Here, since $V_{FM}^h \not\subset H^1(T)$, we introduce the discrete gradient operator $\nabla_h$, which is defined element-wisely.
Given $u_h \in V_{FM}^h$, $\nabla_h u_h \in (L^2(T))^2$ and each part of $\nabla_h u_h$ is piecewise linear polynomial, i.e.,
$(\nabla_h u_h)|_K = \nabla (u_h|_K) $. For $u\in H^1(T)$, we have $\nabla u = \nabla_h u$.

Extend the bilinear forms $M$ and $N$ to $V^h_{FM}$: for any $u_{h}, v_{h} \in V^{h}_{FM}$,
\begin{equation}\label{fujino-morley-Mh-Nh}
\begin{split}
M_h(u_h,v_h):=  & \sum_{K\in \mathcal{T}^h} \int_{K} D^2 u_h \cdot D^2 v_h \mbox{d}K,\quad  \\
N_h(u_h,v_h):=  & \sum_{K\in \mathcal{T}^h} \int_{K} \nabla (u_h - \Pi_2 u_h) \cdot \nabla (v_h - \Pi_2 v_h) \mbox{d}K\:.
\end{split}
\end{equation}
The extension of $R$ for $V^h_{FM}$, which is also denoted by $R$, is defined by
$$
R(u_h):= \frac{ M_h(u_h,u_h) }{ N_h(u_h,u_h) }\quad \mbox{for any } u_h \in V_{FM}^h, N_h(u_h,u_h) \not= 0 \:.
$$

Define the minimization problem of $R$ over $V_{FM}^h$, for which the minimizer is denoted by
$u_{h,0}$.
\begin{equation}
\label{eq:def-lambda_h_1}
\overline{\lambda}_{h} := \min_{ v_{h} \in V_{FM}^h, |v_{h}-\Pi_2v_{h}|_{1,T}\not=0 } R(v_{h}) = R(u_{h,0}).
\end{equation}

\begin{lemma} \label{lem:minimizer2} The minimizer $u_{h,0}$ is orthogonal to $P_2(T)$ with respect to $M_h(\cdot, \cdot)$.
\end{lemma}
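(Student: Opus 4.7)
The plan is to mimic the variational argument used for \Cref{lem:minimizer}, but with one extra twist to accommodate the vertex-value constraint in the Fujino-Morley space $V^h_{FM}$.

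First I would observe that $P_{2,0}(T) \subset V^h_{FM}$. Indeed, any $\tilde{q} \in P_{2,0}(T)$ is a single quadratic polynomial on all of $T$, so $\tilde q|_K \in P_2(K)$ for every element $K$, the function is globally $C^\infty$ (hence continuous at nodes and with zero normal-derivative jumps across interior edges), and it vanishes at the three vertices $p_1,p_2,p_3$ by definition of $P_{2,0}(T)$. Thus $\tilde q$ satisfies every defining condition of $V^h_{FM}$.

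Next I would set up the admissible perturbation. Given any $q \in P_2(T)$, define $\tilde{q} := q - \Pi_1 q \in P_{2,0}(T) \subset V^h_{FM}$. Since $u_{h,0}$ minimizes $R$ over $V^h_{FM}$ (over the open set where $N_h(\cdot,\cdot) \neq 0$), $u_{h,0} + t\tilde{q} \in V^h_{FM}$ remains admissible for $t$ in a neighborhood of $0$ by continuity of $N_h$. The function $g(t) := R(u_{h,0} + t\tilde{q})$ is therefore differentiable at $t=0$ and the first-order optimality condition yields
\begin{equation*}
M_h(u_{h,0}, \tilde{q}) \;=\; \overline{\lambda}_h \, N_h(u_{h,0}, \tilde{q}).
\end{equation*}

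Then I would simplify both sides. Because $\tilde{q} \in P_2(T)$, the interpolant $\Pi_2 \tilde{q}$ equals $\tilde{q}$, so $\nabla(\tilde{q} - \Pi_2 \tilde{q}) = 0$ element-wise, giving $N_h(u_{h,0}, \tilde{q}) = 0$. Moreover $D^2(\Pi_1 q) = 0$ since $\Pi_1 q$ is linear, so $D^2 \tilde{q} = D^2 q$ on every element, which implies $M_h(u_{h,0}, q) = M_h(u_{h,0}, \tilde{q})$. Combining these gives $M_h(u_{h,0}, q) = 0$ for all $q \in P_2(T)$, which is exactly the asserted $M_h$-orthogonality.

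The only step that requires a moment's thought is verifying that the perturbation $u_{h,0}+t\tilde q$ genuinely lies in $V^h_{FM}$; this is the reason for working with $\tilde q$ rather than $q$ itself. All remaining manipulations are a direct rewrite of the proof of \Cref{lem:minimizer} with $M, N$ replaced by their broken counterparts $M_h, N_h$, so no new analytic difficulty arises.
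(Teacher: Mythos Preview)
Your proof is correct and is precisely the adaptation of the \Cref{lem:minimizer} argument that the paper has in mind (the paper omits the proof as ``analogous''). You correctly supply the one detail the analogy requires: because $V^h_{FM}$ imposes $v_h(p_i)=0$, the admissible perturbation must be $\tilde q = q - \Pi_1 q \in P_{2,0}(T)\subset V^h_{FM}$ rather than $q$ itself, after which $D^2\tilde q = D^2 q$ recovers $M_h$-orthogonality to all of $P_2(T)$.
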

\begin{proof}
The proof is omitted since it is analogous to the one for \Cref{lem:minimizer}.
\end{proof}

As an approximation to space $W$, let us introduce the complement space, denoted by $W^h$, of $P_{2,0}(T)$ in $V_{FM}^h$ with respect to inner product $M_h$.
\begin{equation}
\label{eq:def-W-h}
W^h:=\{ v \in V_{FM}^h ~|~ M_h(v,p)=0, ~~\forall p \in P_{2,0}(T) \}\:.
\end{equation}
The bilinear forms $M_h$ and $N_h$ are both positive definite in $W^h$.
For function $u \in W+W^h$, introduce the norm $\|\cdot\|_M$ and $\|\cdot\|_N$ by
\begin{equation}
\label{eq:def-M-N-norm}
\|u\|_M := \sqrt{M_h(u,u)}, \quad  \|u\|_N := \sqrt{N_h(u,u)}\:.
\end{equation}

By using $W^h$, the quantity $\overline{\lambda}_{h}$ can be characterized by
\begin{equation}
\label{eq:def-lambda_h_2}
\overline{\lambda}_{h} := \min_{ u \in W^h, u\not=0 } R(u)\:.
\end{equation}

\paragraph{Fujino-Morley interpolation operator $\Pi^h_{FM}$}
The Fujino-Morley interpolation $\Pi^h_{FM}$ is defined element-wisely.
For an element $K$, denote the edges by $e_1$, $e_2$, $e_3$ and the vertices by $v_1$, $v_2$, $v_3$.
Given $u \in H^2(T)$,  $(\Pi^h_{FM} u) |_K$ is a piece-wise quadratic polynomial such that
\begin{equation}
\int_{e_i} \nabla (u- (\Pi^h_{FM} u)|_K)\cdot \vec{n} \ud s=0, ~
\quad u(v_i) - (\Pi^h_{FM}(u)) (v_i) =0 \quad (i=1,2,3)\:.
\end{equation}
It is easy to verify that $\Pi^h_{FM} u \in V_{FM}^h$  for $u \in  V_0$. \\

\paragraph{Orthogonality of $\Pi^h_{FM}$}
Given a triangle element $K \in \mathcal{T}^h$, for $u \in H^2(K)$, by integration by part, it is easy to see that
$$
\int_K D^2 (\: \Pi^h_{FM}u|_K - u|_K) \cdot  D^2 v  \:\mbox{d}{K} =0, \quad \forall v \in P_2(K)\:.
$$
Thus, the following orthogonality holds.
\begin{equation}
\label{eq:interp_is_project}
M_h(u-\Pi_{FM}^hu, v_{h})=0 , \quad \forall v_{h} \in V_{FM}^h\:.
\end{equation}
From the above orthogonality of $\Pi^h_{FM}$,
it is easy to see that $\Pi_{FM}^h u \in W^h$ for $u \in W$. Therefore, the interpolation operator $\Pi_{FM}^h$ is  just the projection operator that maps  $W$ to $W^h$ with respect to $M_h(\cdot, \cdot)$.\\

Due to the special mesh as shown in \Cref{fig:tri_mesh_special},
$(u - \Pi^h_{FM}u)$ vanishes on  $6$ points $\{p_i\}_{i \in I_0}$. Therefore, $\Pi_2 (u - \Pi^h_{FM}u ) =0$ and
$$
(u-\Pi^h_{FM} u )-\Pi_2 (u-\Pi^h_{FM} u ) = (u-\Pi^h_{FM} u )\:.
$$
Thus the $\|\cdot\|_N$ norm of $(u-\Pi^h_{FM} u)$ can be given as follows.
\begin{equation}
\label{eq:norm-equivalence}
\| u-\Pi^h_{FM} u \|_{N} =  \| \nabla_h ( u-\Pi^h_{FM} u )\|_{0,\Omega}\:.
\end{equation}

The following lemma provides a constant that will be used for the interpolation error estimation.
\begin{lemma}[\cite{Liu-2015}]
\label{lem:ch_value} Given a triangle $K$, denote the edges by $\{ e_i\}_{i=1}^3$ and let $h_K$ be the largest edge length.
For $v\in H^1(K)$ such that $\int_{e_i}v\mbox{d}s=0$, $i=1,2,3$, we have
$$
\|v\|_{0,K} \le  0.1893\:  h_K \:  |v|_{1,K}\:.
$$
\end{lemma}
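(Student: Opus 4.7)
My plan is to identify the sharp constant in the inequality as an eigenvalue of a constrained Laplace problem on $K$ and to reduce the uniform bound to a shape-wise eigenvalue estimate that falls under the guaranteed computation framework of Liu \cite{Liu-2015}.

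First, introduce the closed subspace $V(K) := \{v \in H^1(K) : \int_{e_i} v\, \mathrm{d}s = 0,\; i=1,2,3\}$. This space is precisely the kernel of the Crouzeix--Raviart edge-average interpolation operator on $K$. Define the best constant
\[
C(K) := \sup_{v \in V(K),\, v \ne 0} \frac{\|v\|_{0,K}}{|v|_{1,K}}.
\]
Standard variational reasoning identifies $C(K)^{-2}$ with the smallest eigenvalue $\mu_1(K)$ of the constrained problem: find $(\mu,u)\in\mathbb{R}_{>0}\times V(K)$ with $\int_K \nabla u\cdot\nabla w\,\mathrm{d}x=\mu\int_K uw\,\mathrm{d}x$ for all $w\in V(K)$. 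The compact embedding $H^1(K)\hookrightarrow L^2(K)$ together with closedness of $V(K)$ guarantees that this spectrum is discrete and positive, and the supremum is attained by an eigenfunction.

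Second, pull back to a normalized reference triangle $\hat K(\sigma)$ with longest edge of length one via the natural affine map. The edge-mean-zero constraints are preserved under affine pullback, and accounting for the scaling of the $L^2$-norm and the $H^1$-seminorm yields $C(K) = h_K\cdot c(\sigma)$, where $\sigma$ parameterizes the shape of $K$ by its two free interior angles. It then suffices to prove $c(\sigma) \le 0.1893$ uniformly in $\sigma$.

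The principal obstacle is this uniform shape estimate. My strategy, mirroring Liu \cite{Liu-2015}, is: (i) discretize the $\mu_1$ eigenvalue problem on $\hat K(\sigma)$ by a conforming FEM of moderate order; (ii) convert the Galerkin eigenvalue into a rigorous lower bound on $\mu_1$ via Liu's explicit a posteriori eigenvalue estimator, whose constants reduce to elementary projection constants; and (iii) combine a Lipschitz continuity estimate for $c(\sigma)$ in the shape parameter with a finite verification grid to cover the full shape-parameter space. The delicate point will be ensuring that $c(\sigma)$ stays below $0.1893$ as the triangle degenerates; this is typically handled by a separate monotonicity argument in the aspect ratio, or by exploiting that thin triangles can be analyzed on a boundary stratum of the shape space by a direct one-dimensional reduction.
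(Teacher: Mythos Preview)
The paper does not prove this lemma at all: it is quoted verbatim from \cite{Liu-2015} and used as a black box. So there is no ``paper's proof'' to compare against beyond the reference itself. That said, your outline deserves comment on its own merits.

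Your first two steps are correct and standard: the optimal constant is $C(K)^{2}=1/\mu_{1}(K)$ for the constrained Laplace eigenproblem on $V(K)$, and the affine pullback gives $C(K)=h_{K}\,c(\sigma)$ with $\sigma$ a shape parameter. The difficulty, as you say, is the uniform bound $c(\sigma)\le 0.1893$.

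Where your plan becomes shaky is step (i)--(ii). You propose a \emph{conforming} discretisation and then Liu's lower-bound formula $\lambda_{k}\ge \lambda_{h,k}/(1+\lambda_{h,k}C_{h}^{2})$. But that formula needs an explicit $C_{h}$ with $\|u-P_{h}u\|_{N}\le C_{h}\|u-P_{h}u\|_{M}$, i.e.\ here $\|u-P_{h}u\|_{0,K}\le C_{h}\,|u-P_{h}u|_{1,K}$. For the $H^{1}$-orthogonal projection onto a conforming subspace, $u-P_{h}u$ carries no useful local mean-zero structure on the sub-elements, so there is no elementary, mesh-scaled constant $C_{h}$ available---you would essentially need the very inequality you are trying to prove, or something equivalent. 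In \cite{Liu-2015} this circularity is broken by using the \emph{non-conforming} Crouzeix--Raviart space: the CR interpolant is the $M_{h}$-projection, and the interpolation error has vanishing edge means on every sub-element, so the needed $C_{h}$ is the same constant on triangles of half the diameter. This yields a self-referential inequality in $c(\sigma)$ that can be closed after one refinement and a small, explicitly solvable matrix eigenproblem.

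Your step (iii)---a finite grid in $\sigma$ plus a Lipschitz bound---could in principle be made rigorous, but it is not what \cite{Liu-2015} does, and you have not supplied the Lipschitz constant or handled the degenerate stratum; the hand-wave about ``monotonicity in the aspect ratio'' or a ``one-dimensional reduction'' is exactly the missing content. So as written the proposal has a genuine gap at the point where the guaranteed lower eigenvalue bound is produced, and a second gap in the uniform-in-shape step.
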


By using the constant in Lemma \ref{lem:ch_value}, we have the following theorem.
\begin{theorem}
Let $h$ be the mesh size of $\mathcal{T}_h$ (see \cref{def:mesh-size}).
Then the following estimation for $\Pi^h_{FM} $ holds.
\begin{equation}\label{thm:interpolation-estimation}
\|u-\Pi^h_{FM} u\|_{N} \le 0.1893 h \|u - \Pi^h_{FM}u \|_{M}, \,\,\forall\, u \in H^2(T)\:.
\end{equation}
\end{theorem}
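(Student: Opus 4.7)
Write $w := u - \Pi^h_{FM} u$. By \cref{eq:norm-equivalence} (which uses precisely the special mesh of \Cref{fig:tri_mesh_special} to force $\Pi_2 w = 0$) we already have $\|w\|_N = \|\nabla_h w\|_{0,\Omega}$, and by definition $\|w\|_M^2 = \sum_{K \in \mathcal{T}^h}\int_K |D^2 w|^2 \, \mathrm{d}K$. So it suffices to prove the elementwise estimate
\[
\|\nabla w\|_{0,K}^2 \le (0.1893\, h_K)^2 \int_K |D^2 w|^2 \, \mathrm{d}K
\]
on every $K \in \mathcal{T}^h$, and then sum using $h_K \le h$.

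The plan is to apply \Cref{lem:ch_value} separately to the two scalar functions $v = \partial_x w$ and $v = \partial_y w$ on each element. The nontrivial thing to verify is therefore the hypothesis $\int_{e_i} v \, \mathrm{d}s = 0$ on each edge $e_i$ of $K$. I will establish this by combining the two interpolation conditions defining $\Pi^h_{FM}$: first, by construction of the Fujino-Morley interpolant, the normal moment vanishes, $\int_{e_i} \nabla w \cdot \vec{n}_i \, \mathrm{d}s = 0$; second, since $w$ vanishes at the vertices of $K$, the fundamental theorem of calculus on the edge gives
\[
\int_{e_i} \nabla w \cdot \vec{t}_i \, \mathrm{d}s = w(v_{i,\text{end}}) - w(v_{i,\text{start}}) = 0.
\]
Together, $\int_{e_i} \nabla w \, \mathrm{d}s$ vanishes as a vector, so both $\int_{e_i} \partial_x w \, \mathrm{d}s = 0$ and $\int_{e_i} \partial_y w \, \mathrm{d}s = 0$ hold. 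This is the only delicate point of the argument; the rest is bookkeeping.

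With the hypotheses of \Cref{lem:ch_value} verified, I obtain
\[
\|\partial_x w\|_{0,K}^2 \le (0.1893\, h_K)^2\, |\partial_x w|_{1,K}^2, \qquad \|\partial_y w\|_{0,K}^2 \le (0.1893\, h_K)^2\, |\partial_y w|_{1,K}^2.
\]
Adding these and recognizing that $|\partial_x w|_{1,K}^2 + |\partial_y w|_{1,K}^2 = \int_K \bigl((\partial_{xx}w)^2 + 2(\partial_{xy}w)^2 + (\partial_{yy}w)^2\bigr) \, \mathrm{d}K = \int_K |D^2 w|^2 \, \mathrm{d}K$ (this matches the definition of $D^2$ in the paper, which lists $\partial_{xy}$ and $\partial_{yx}$ as separate entries), yields the elementwise estimate. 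Summing over $K \in \mathcal{T}^h$ and bounding each $h_K$ by $h$ completes the proof.
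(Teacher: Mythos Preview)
Your proof is correct and follows essentially the same route as the paper: set $w=u-\Pi^h_{FM}u$, use \cref{eq:norm-equivalence} to reduce $\|w\|_N$ to $\|\nabla_h w\|_{0,\Omega}$, verify that $\int_{e_i}\partial_x w\,\mathrm{d}s=\int_{e_i}\partial_y w\,\mathrm{d}s=0$ from the normal-moment and vertex interpolation conditions, apply \Cref{lem:ch_value} to each first derivative on each element, and sum. Your justification of the vanishing tangential moment via the fundamental theorem of calculus is in fact more explicit than the paper's, which simply asserts both the normal and tangential edge integrals vanish ``from the definition of Fujino-Morley interpolation.''
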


\begin{proof}
Given an element $K$  of $\mathcal{T}^h$, the edges of which are denoted by
$\{e_i\}_{i=1}^3$ and the longest edge length being $h_K$.
Take $w := u - \Pi_{FM}^h u$. From the definition of Fujino-Morley interpolation, we have
$$
\int_{e_i} \frac{\partial w}{\partial\vec{n}} \ud s = \int_{e_i} \frac{\partial w}{\partial\vec{\tau}} \ud s = 0 \quad (i=1,2,3)\:,
$$
where $\vec{n}$ and $\vec{\tau}$ are the unit outward normal and unit tangent direction, respectively.
Hence,
$$
\int_{e_i}\frac{\partial w}{\partial x} \ud s = \int_{e_i}\frac{\partial w}{\partial y} \ud s = 0 \quad  (i=1,2,3).
$$
From the estimation in \Cref{lem:ch_value}, we have
$$
\|\frac{\partial w}{\partial x}\|_{0,K} \leq 0.1893 h_K \|\nabla(\frac{\partial w}{\partial x})\|_{0,K},
\quad
\|\frac{\partial w}{\partial y}\|_{0,K} \leq 0.1893 h_K \|\nabla(\frac{\partial w}{\partial y})\|_{0,K}.
$$
By the summation of the square of above integral over all $K$ of $\mathcal{T}^h$ and applying the equation (\ref{eq:norm-equivalence}),
we can easily draw the conclusion.
\end{proof}

\paragraph{Eigenvalue problem in finite element space}
Consider the eigenvalue problem defined over $W^h$: Find $(\lambda_h, u_h)\in \mathbb{R}\times W^h$ such that
\begin{equation}
\label{eq:eig-fem}
M_h(u_h, v_h) = \lambda_h N_h(u_h, v_h), \,\,\forall v_h \in W^h.
\end{equation}
The above eigenvalue problem has eigenvalues as follows,
$$
0< \lambda_{h,1} \le \lambda_{h,2} \cdots \le \lambda_{h,N_0} \quad   ( N_0:=\Dim(W^h))\:.
$$
Then the quantity $\overline{\lambda}_{h}$ (see \cref{eq:def-lambda_h_1} and \cref{eq:def-lambda_h_2}) is given by the smallest eigenvalue of the eigenvalue problem in (\ref{eq:eig-fem}), i.e.,
$\overline{\lambda}_{h} = \lambda_{h,1}$.

\begin{remark}
In practical computing, the space $W^h$ is not needed to be constructed explicitly.
Instead, we solve the eigenvalue problem defined by
\begin{equation*}
\mbox{Find $(\eta_h, u_h)\in \mathbb{R}\times V_{FM}^h$ s.t. }
N_h(u_h, v_h) = \eta_h M_h(u_h, v_h), \,\,\forall v_h \in V_{FM}^h.
\end{equation*}
The above eigenvalue problem has the eigenvalues distributed as follows.
$$
\eta_{h,1} \ge \eta_{h,2} \ge  \cdots \ge \eta_{h,N_0} >  \eta_{h,N_0+1} = \cdots = \eta_{h,N_1} =0\:.
$$
where $N_1 =\mbox{\rm dim}(V_{FM}^h)$ and $N_0 =\mbox{\rm dim}(W^h)$. Moreover, for the positive eigenvalue $\eta_{h,i}$, we have
$$
\lambda_{h,i} =\eta_{h,i}^{-1}  \quad (i=1, \cdots, N_0)\:.
$$
\end{remark}

\subsection{An abstract framework of lower eigenvalue bounds}
\label{subsec:frame}

Let us shape out the framework with the following assumptions.

\begin{enumerate}
\item [(A1)]
$V$ is a real infinite dimensional Hilbert space on $\Omega$ with the inner product $M(\cdot, \cdot)$ and
the corresponding norm $\|\cdot\|_M := \sqrt{M(\cdot,\cdot)}$.

\item [(A2)]
$N(\cdot, \cdot)$ is also an inner product of $V$.
The corresponding norm $\|\cdot\|_N := \sqrt{N(\cdot,\cdot)}$ is compact with respect to $\|\cdot\|_M$, i.e.,
every sequence in $V$ which is bounded in $\|\cdot\|_M$ has a subsequence which is Cauchy in $\|\cdot\|_N$.

\item [(A3)]
$V^h$ is a finite dimensional space of real functions over $\Omega$. Define $V(h) := V + V^h = \{v+v_h\,|\, v\in V, v_h\in V^h\}$.

\item[(A4)]
Bilinear forms $M_h(\cdot, \cdot)$ and $N_h(\cdot, \cdot)$ on $V(h)$ are extension of $M(\cdot, \cdot)$ and $N(\cdot, \cdot)$ to $V(h)$ such that
\begin{enumerate}
\item[-] $M_h(u,v) = M(u, v)$, $N_h(u, v) = N(u, v)$ for all $u, v\in V$.
\item[-] $M_h(\cdot, \cdot)$ and $N_h(\cdot, \cdot)$ are symmetric and positive definite on $V(h)$.
\end{enumerate}
\end{enumerate}

It is easy to confirm that $V(h)$ is also a Hilbert space, along with inner product $M_h(\cdot, \cdot)$ and  $N_h(\cdot, \cdot)$.
The norms corresponding to $M_h$ and $N_h$ are still denoted by $\|\cdot\|_M$ and $\| \cdot \|_N$, respectively.\\
%For simplicity, $M(\cdot, \cdot)$ and $\|\cdot\|_M$ are still used to denote
%the extended bilinear form $M_h(\cdot, \cdot)$ and the corresponding norm, and so do $N(\cdot, \cdot)$.
%Moreover, $\|\cdot\|_N$ is also compact in $V(h)$ with respect to $\|\cdot\|_M$.

In \cite{Liu-2015}, the following two eigenvalue problems are considered.

(EVP) Find $(\lambda,u)\in \mathbb{R}\times V$ such that
\begin{equation}\label{eigen-problem}
M(u, v) = \lambda N(u, v), \quad \forall v \in V.
\end{equation}

(EVP-h) Find $(\lambda_h,u_h)\in \mathbb{R}\times V^h$ such that
\begin{equation}\label{eigen-problem-fem}
M_h(u_h,v_h)=\lambda_h N_h(u_h,v_h), \quad \forall\, v_h\in V^h.
\end{equation}

Suppose $\Dim(V^h)=n$.
The eigenvalues of (EVP) and (EVP-h), in an increasing order, are denoted by
$\{\lambda_k\}_{k=1}^\infty$ and  $\{\lambda_{h,k}\}_{k=1}^n$, respectively.

\begin{theorem}\label{liu-amc-main}
Let $P_h:V(h)\mapsto V^h$ be the projection with respect to inner product $M_h$, i.e., for any $u\in V(h)$
\begin{equation}\label{M-projection}
M_h(u-P_h u, v_h) = 0, \quad \forall v_h\in V^h.
\end{equation}
Suppose there exists a quantity $C_h$ such that
\begin{equation}\label{M-projection-estimate}
\|u - P_h u\|_N \leq C_h \|u - P_h u\|_M,    \quad \forall u\in V\:.
\end{equation}
Let $\lambda_k$ and $\lambda_{h,k}$ be the ones defined in \cref{eigen-problem} and \cref{eigen-problem-fem}, then we have
\begin{equation}\label{framework lower bounds}
\lambda_k \ge \frac{\lambda_{h,k}}{1+\lambda_{h,k}C_h^2} \quad (k=1,2,\cdots,n).
\end{equation}
\end{theorem}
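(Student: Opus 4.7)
The strategy is operator-theoretic, based on the Courant--Fischer min--max principle and the projection $P_h$. Observe first that by elementary algebra the claim is equivalent to
\begin{equation*}
\frac{1}{\lambda_k} - \frac{1}{\lambda_{h,k}} \le C_h^2,
\end{equation*}
and I would work with this reformulation. Introduce the compact, self-adjoint, positive operator $T: V \to V$ defined by $M(Tu, v) = N(u, v)$ (compactness of $T$ in $(V, M)$ follows from (A2)), together with its finite-dimensional counterpart $T_h: V^h \to V^h$ defined by $M_h(T_h u_h, v_h) = N_h(u_h, v_h)$. Their eigenvalues in decreasing order are precisely $\mu_k := 1/\lambda_k$ and $\mu_{h,k} := 1/\lambda_{h,k}$. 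The Courant--Fischer principle then gives
\begin{equation*}
\mu_k = \max_{\substack{S \subset V \\ \dim S = k}} \min_{0 \neq v \in S} \frac{N(v, v)}{M(v, v)}, \qquad \mu_{h,k} = \max_{\substack{S_h \subset V^h \\ \dim S_h = k}} \min_{0 \neq v_h \in S_h} \frac{N_h(v_h, v_h)}{M_h(v_h, v_h)}.
\end{equation*}

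The next step is to transport the optimal continuous trial subspace into $V^h$ through $P_h$. Take $E_k \subset V$ spanned by the first $k$ eigenfunctions of (EVP), chosen $M$-orthonormal, and set $F_k := P_h(E_k) \subset V^h$. After verifying $\dim F_k = k$ (injectivity of $P_h|_{E_k}$ holds under mild assumptions on $C_h$; otherwise a small-perturbation argument applies), the discrete min--max delivers
\begin{equation*}
\mu_{h,k} \ge \min_{0 \neq v \in E_k} \frac{N_h(P_h v, P_h v)}{M_h(P_h v, P_h v)}.
\end{equation*}
It then suffices to prove the lower bound $N_h(P_h v, P_h v)/M_h(P_h v, P_h v) \ge \mu_k - C_h^2$ for the vector $v \in E_k$ at which the above minimum is attained.

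The technical core is this Rayleigh-quotient comparison. Setting $v_h := P_h v$ and $w := v - v_h$, the $M_h$-orthogonality of $w$ to $V^h$ gives the Pythagorean identity $\|v_h\|_M^2 = \|v\|_M^2 - \|w\|_M^2$. Expanding
\begin{equation*}
\|v\|_N^2 = \|v_h\|_N^2 + 2 N_h(v_h, w) + \|w\|_N^2
\end{equation*}
and using the eigenrelation $M(u_i, \phi) = \lambda_i N(u_i, \phi)$ component-by-component on the spectral decomposition of $v$, one rewrites the cross term $N_h(v_h, w)$ purely in terms of $\|w\|_M^2$, $\|w\|_N^2$, and the eigenvalues $\lambda_i$. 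Substituting into the discrete Rayleigh ratio and invoking the projection estimate $\|w\|_N \le C_h \|w\|_M$ from (\ref{M-projection-estimate}) should close the argument. The main obstacle is precisely this cross term, which is nonzero because $P_h$ is $M$-orthogonal rather than $N$-orthogonal; controlling it requires exploiting the eigenstructure of $E_k$. I expect the cleanest bookkeeping will treat $E_k$ as an invariant subspace — diagonalizing $T|_{E_k}$ and then applying a Ky Fan-type trace inequality — rather than attempting a pointwise Rayleigh bound, because the pointwise inequality in general fails for arbitrary $v \in E_k$ and only survives after minimization over the subspace.
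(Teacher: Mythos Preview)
The paper itself does not prove this theorem; it is quoted from \cite{Liu-2015}, so the comparison below is against the argument there.

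Your overall setup --- the reformulation $\mu_k-\mu_{h,k}\le C_h^2$, the max--min characterisation of $\mu_k,\mu_{h,k}$, and transporting the eigenspace $E_k$ into $V^h$ via $P_h$ --- is exactly the skeleton of Liu's proof. The divergence is at what you call the technical core.

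You assert that the pointwise Rayleigh comparison fails for general $v\in E_k$ and therefore plan to fall back on the spectral decomposition of $v$ (to rewrite the cross term $N_h(v_h,w)$) together with a Ky~Fan/trace argument. Both parts of this are off. First, in the non-conforming setting the eigenrelation $M(u_i,\phi)=\lambda_i N(u_i,\phi)$ is available only for test functions $\phi\in V$, whereas $w=v-P_hv$ lies in $V(h)\setminus V$; the rewriting you describe is therefore not legitimate. Second --- and this is the real point --- the pointwise inequality \emph{does} hold, and no eigenstructure is needed at all. The key lemma in \cite{Liu-2015} is: for every $u\in V$ with $P_hu\neq 0$,
\[
\frac{\|P_hu\|_N^2}{\|P_hu\|_M^2}+C_h^{2}\;\ge\;\frac{\|u\|_N^2}{\|u\|_M^2}.
\]
Its proof is pure algebra. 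With $a=\|P_hu\|_M$, $b=\|P_hu\|_N$, $c=\|w\|_M$, $d=\|w\|_N$, one has $\|u\|_M^2=a^2+c^2$ (Pythagoras in $M_h$), $\|u\|_N\le b+d$ (triangle inequality in $N_h$), and $d\le C_hc$ (the hypothesis). The desired inequality then reduces to $(b^2+C_h^2a^2)(a^2+c^2)\ge a^2(b+C_hc)^2$, which is just $(bc-C_ha^2)^2\ge 0$. Applying this lemma to any $v\in E_k$ immediately gives $\|P_hv\|_N^2/\|P_hv\|_M^2\ge \mu_k-C_h^2$, and the injectivity of $P_h|_{E_k}$ follows from the same lemma when $\mu_k>C_h^2$ (when $\mu_k\le C_h^2$ the target bound is trivial). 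The cross term $N_h(v_h,w)$ never has to be confronted directly.
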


\begin{remark}
The above theorem does not require $V^h\subset V$. Thus, we can use non-conforming finite element methods to obtain lower eigenvalue bounds.
\end{remark}

\subsection{ Calculation of lower bound of $\overline{\lambda}$}

To apply the  framework introduced in \Cref{subsec:frame}, we take the following settings.
\begin{enumerate}
\item [(S1)] $\Omega:=T$, $V:=W$, $V^h:=W^h$;
\item [(S2)] $M,N$: the ones defined in \cref{eq:def-M-N};
\item [(S3)] $M_h, N_h$: the ones defined in \cref{fujino-morley-Mh-Nh};
\item [(S4)] $P_h:=\Pi_{FM}^h$: from \cref{eq:interp_is_project} and \cref{thm:interpolation-estimation}, we have
$$
\| u - P_h u  \|_N  \le 0.1893h \| u - P_h u  \|_M\,, ~~\forall u \in V.
$$
\end{enumerate}

By applying the result in \Cref{liu-amc-main}, we can give explicit lower bound for eigenvalues in (\ref{eq:eig-for-constant}).
\begin{theorem}
\label{thm:final_conclusion}
For the eigenvalues defined in  (\ref{eq:eig-for-constant}) and (\ref{eq:eig-fem}), we have
\begin{equation}\label{fm-lower-bound}
\lambda_k \geq \frac{\lambda_{h,k}}{1+\lambda_{h,k} (0.1893h)^2} \quad (k=1, \cdots, N_0)\:.
\end{equation}
\end{theorem}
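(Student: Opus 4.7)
The plan is to derive (\ref{fm-lower-bound}) as a direct corollary of the abstract Theorem~\ref{liu-amc-main} applied under the concrete settings (S1)--(S4). Essentially all preparatory work has been done earlier in the paper; what remains is to check that the abstract assumptions (A1)--(A4) are satisfied and that the projection estimate (\ref{M-projection-estimate}) holds with $C_h = 0.1893\,h$, at which point the conclusion is immediate.

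To verify (A1)--(A2), I would observe that $W$ is the $M$-orthogonal complement of the finite-dimensional subspace $P_{2,0}(T)$ inside the Hilbert space $V_0\subset H^2(T)$, so it inherits the Hilbert structure under $M$. Compactness of $\|\cdot\|_N$ relative to $\|\cdot\|_M$ is then a standard consequence of the Rellich--Kondrachov compact embedding $H^2(T)\hookrightarrow H^1(T)$. To show that $N$ is a genuine inner product on $W$, and not merely semi-definite, I would note that $N(u,u)=0$ combined with the constraints defining $V_0$ forces $u\in P_2(T)\cap V_0=P_{2,0}(T)$, which together with the $M$-orthogonality built into the definition of $W$ yields $u=0$. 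For (A3)--(A4), the space $W^h\subset V_{FM}^h$ is finite-dimensional by construction, symmetry of $M_h$ and $N_h$ is evident, and positive definiteness on $V(h)=W+W^h$ follows by the same pattern, invoking the Fujino--Morley continuity at interior nodes and the vertex-vanishing condition to rule out nontrivial degeneracies.

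The projection estimate is exactly where the special mesh of Figure~\ref{fig:tri_mesh_special} pays off. The orthogonality identity (\ref{eq:interp_is_project}), together with the earlier remark that $\Pi_{FM}^h$ maps $W$ into $W^h$, shows that $\Pi_{FM}^h|_W$ coincides with the $M_h$-projection $P_h$ appearing in (\ref{M-projection}). The interpolation bound (\ref{thm:interpolation-estimation}) then immediately supplies $C_h=0.1893\,h$ in (\ref{M-projection-estimate}). With the identification of the eigenvalue problems (\ref{eq:eig-for-constant})--(\ref{eq:eig-fem}) with the abstract (EVP) and (EVP-h) from (\ref{eigen-problem}) and (\ref{eigen-problem-fem}) under (S1)--(S4), Theorem~\ref{liu-amc-main} gives (\ref{fm-lower-bound}) for each $k=1,\ldots,N_0$.

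I expect the main obstacle to be verifying that $N_h$ is strictly positive definite on $V(h)$, rather than only semi-definite as it is on all of $V_{FM}^h$; without this, $W^h$ does not carry a genuine $N_h$-inner product and the abstract framework does not apply. The argument must exploit the special mesh structure: only because $p_{12}, p_{23}, p_{31}$ are mesh nodes can one conclude $\Pi_2(v_h-\Pi_{FM}^h v_h)=0$ and propagate a hypothetical zero of $\|v_h-\Pi_2 v_h\|_{0,\Omega}$ into $v_h\in P_{2,0}(T)$, which is then killed by the $M_h$-orthogonality defining $W^h$.
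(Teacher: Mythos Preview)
Your proposal is correct and follows exactly the approach the paper takes: the paper simply lists the settings (S1)--(S4), notes via (\ref{eq:interp_is_project}) and (\ref{thm:interpolation-estimation}) that $P_h=\Pi_{FM}^h$ satisfies the projection estimate with $C_h=0.1893h$, and then invokes Theorem~\ref{liu-amc-main}. If anything, you have been more careful than the paper in spelling out why (A1)--(A4) hold, and your diagnosis of the one nontrivial point---positive definiteness of $N_h$ on $W+W^h$ via the special mesh and the $M_h$-orthogonality built into $W^h$---is exactly the right one.
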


\begin{remark}
In this paper, we are only concerning the lower bound of the first eigenvalue of  \cref{eq:eig-for-constant}.
In fact, with lower bound for the second eigenvalue provided in the above theorem, we can further apply Lehmann-Goerisch's method
to have a sharp bound for the first eigenvalue. Refer to \cite{liu-2013-2} for the case of error constant estimation
for the constant interpolation functions. The high-precision bound for the interpolation constant $C_T$ will be considered in future work.
\end{remark}

\section{Numerical computations}
\label{sec:numerical-results}

\noindent
In this section, we apply the Fujino-Morley finite element along with the bound
in Theorem \ref{thm:final_conclusion} to calculate the upper bound for the interpolation constant $C_T$.
To demonstrate the precision of upper bound, the lower bounds are also calculated by using Rayleigh-Ritz method along with
the usage of polynomial space $P_{k,0}(T)$.
The two-side bounds for constant $C_T$ are given for several concrete triangles, for example, the unit isosceles right triangle, the regular triangle.

In order to estimate the rounding error in floating-point number computing, the interval arithmetic is applied
in the numerical computation. As a consequence, the results can be expected to be
mathematically correct. The method of Behnke \cite{Behnke-1991} (along with the INTLAB toolbox, developed by Rump \cite{Rump-1999})
is used to give the verified eigenvalue bounds for the generalized matrix eigenvalue problems.

Suppose the two vertices of triangle domain are fixed to be $(0,0)$ and $(1,0)$, and the third vertex is
given by $(a,b)$ with variable values.
In Table \ref{table:constant},
for different choices of $(a,b)$, we display the verified lower and upper bounds for $\lambda_1$ and
$C_T$ , which are denoted by $\lambda_{\text{low}}$, $\lambda_{\text{upper}}$, $C_{\text{low}}$ and $C_{\text{upper}}$, respectively.
The mesh size for triangulation is taken as $h=1/64$.
The upper bound for $\lambda_1$ is obtained by using the Rayleigh-Ritz bound with polynomial function space $P_{6,0}(T)$.
In \Cref{fig:std-eigfunc} and \Cref{fig:regular-eigfunc}, we display the 3D graph and color-map for the eigenfunction
corresponding to $\lambda_1$.

\begin{figure}[ht]
\begin{center}
\includegraphics[height=5cm,angle=0]{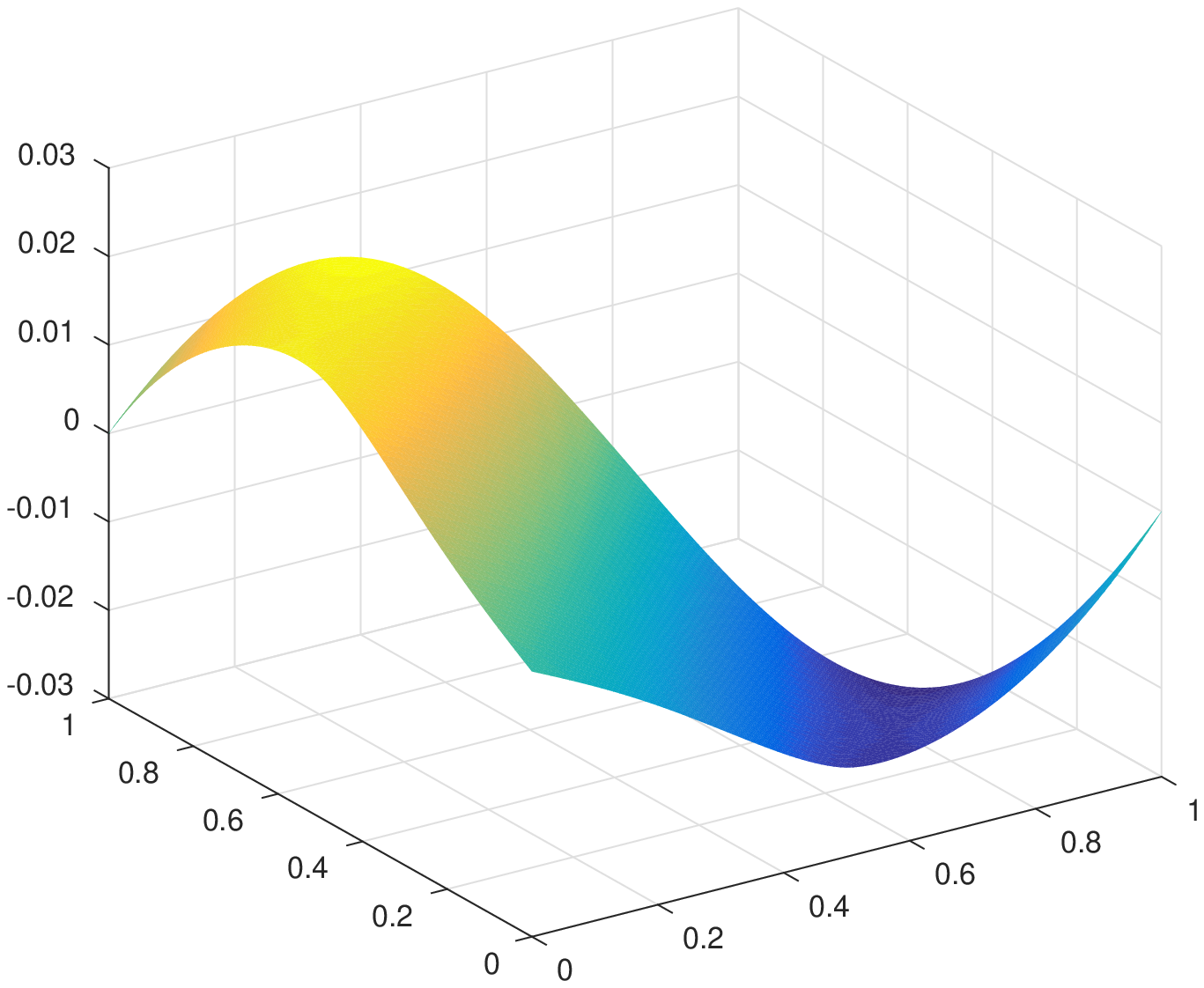}~
\includegraphics[height=5cm,width=5.2cm, angle=0]{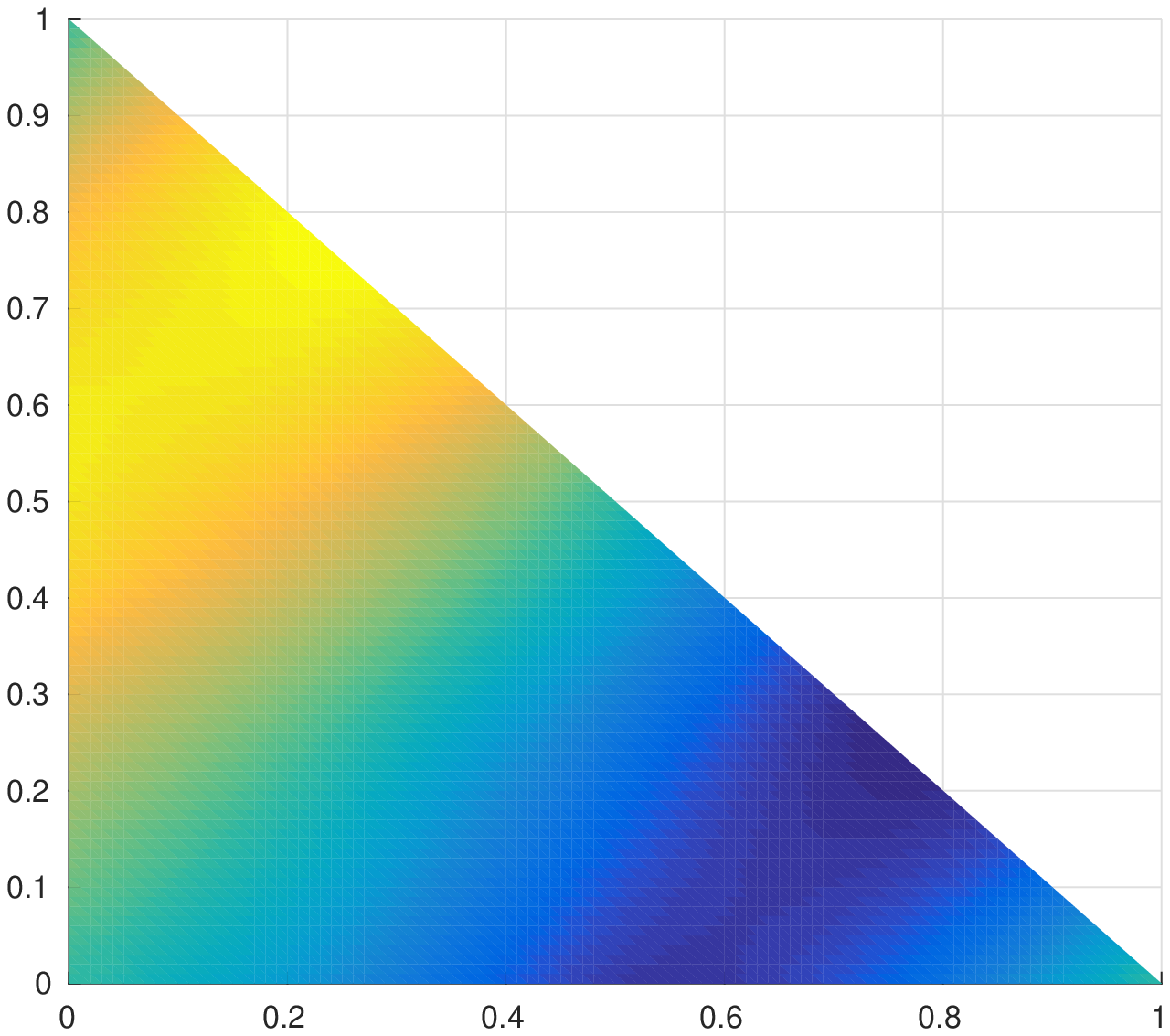}
\end{center}
\caption{\label{fig:std-eigfunc} The first eigenfunction on the unit isosceles right triangle}
\end{figure}

\begin{figure}[ht]
\begin{center}
\includegraphics[height=5cm,angle=0]{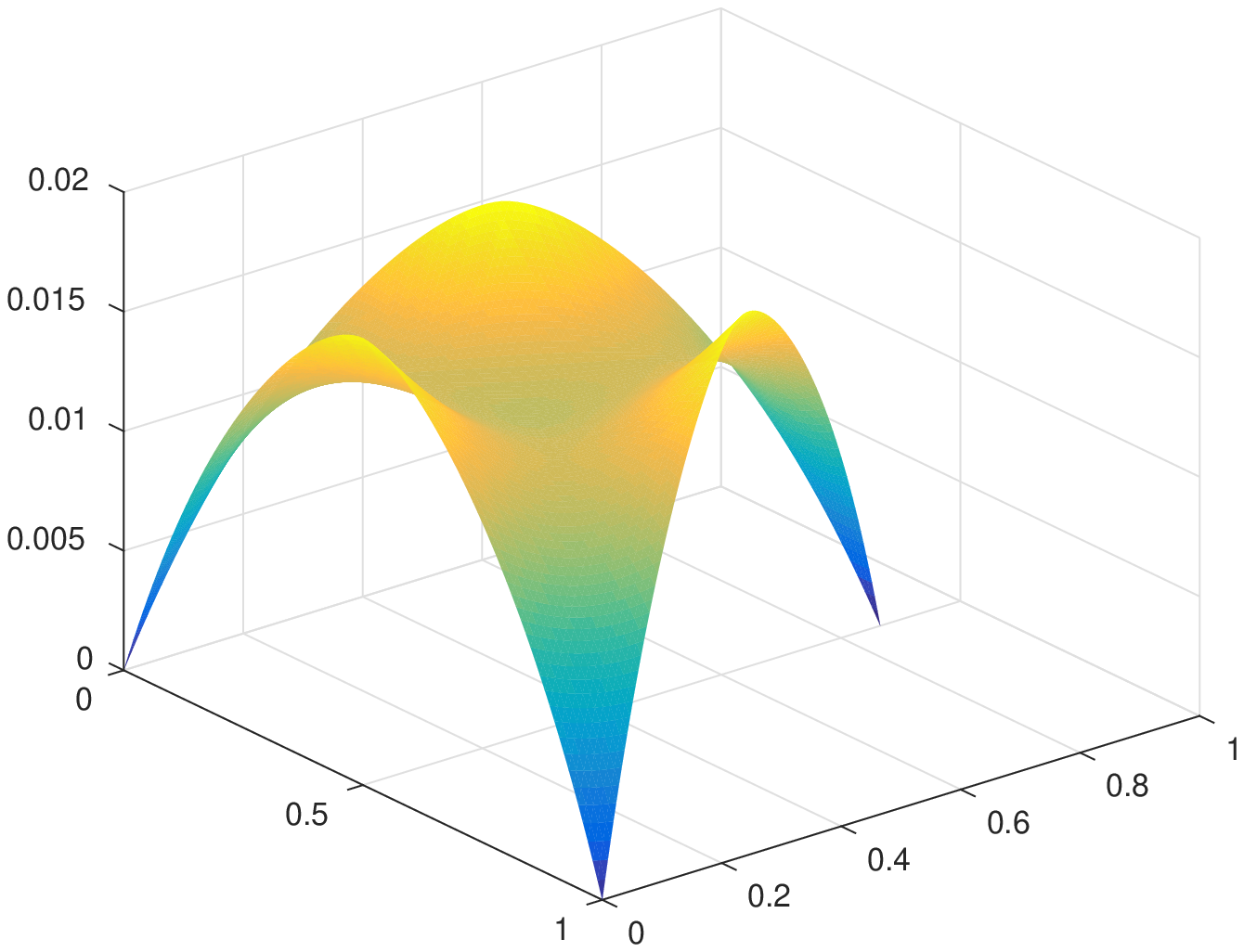}~
\includegraphics[height=5cm,width=5.4cm, ,angle=0]{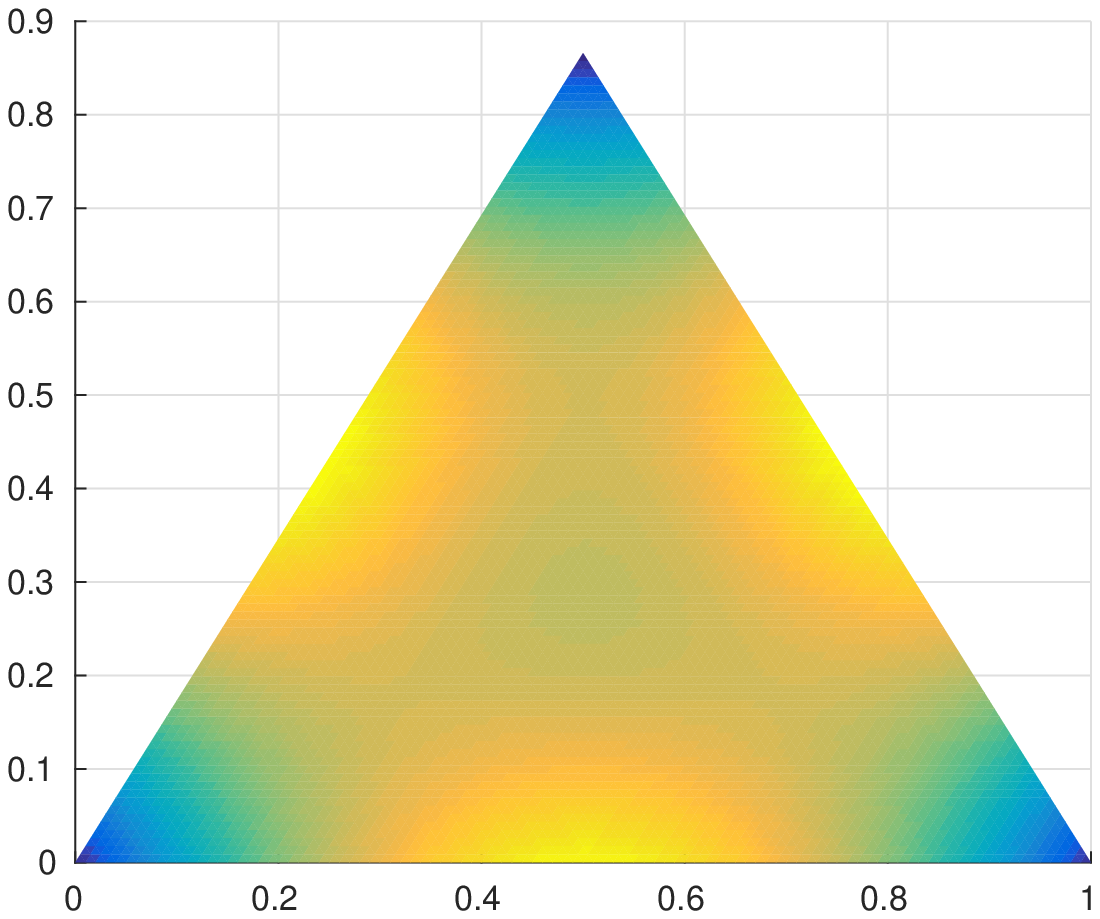}
\end{center}
\caption{\label{fig:regular-eigfunc} The first eigenfunction on the regular triangle}
\end{figure}

%\begin{table}[h]
%\begin{center}
%\begin{tabular}{l|c|c|c|c}
%\hline
%$(a,b)$             & $\lambda_{\text{FM}}$ & $\lambda_{\text{LG}}$ & $C_{\text{FM}}$  & $C_{\text{LG}}$   \\ \hline
%$(0,1)$             & 14.822019759962680     & 15.110182692514858       & 0.257255780664039 & 0.259744467556188 \\ \hline
%$(0,\sqrt{3}/3)$    & 21.496013423624323     & 22.123454770524699       & 0.212605025771799 & 0.215685543773290 \\ \hline
%$(1/2,\sqrt{3}/2)$  & 31.685233284710673     & 32.282061891768421       & 0.176002714936790 & 0.177652590836185 \\ \hline
%$(-1/2,\sqrt{3}/2)$ &  5.158301193011178     &  5.262623693621016       & 0.435912017060597 & 0.440297937929091 \\ \hline
%\end{tabular}
%\end{center}
%\end{table}
%
%\begin{table}[h]
%\begin{center}
%\begin{tabular}{l|c|c|c|c}
%\hline
%$(a,b)$             & $\lambda_{\text{low}}$ & $\lambda_{\text{upper}}$ & $C_{\text{low}}$  & $C_{\text{upper}}$   \\ \hline
%$(0,1)$             & 14.819910945058796     & 15.110182692514858       & 0.257255780664039 & 0.259762947205695 \\ \hline
%$(0,\sqrt{3}/3)$    & 21.491578238885214     & 22.123454770524699       & 0.212605025771799 & 0.215707797976153 \\ \hline
%$(1/2,\sqrt{3}/2)$  & 31.675597941002284     & 32.282061891768421       & 0.176002714936790 & 0.177679608694274 \\ \hline
%$(-1/2,\sqrt{3}/2)$ &  5.158045759797085     &  5.262623693621016       & 0.435912017060597 & 0.440308839860786 \\ \hline
%\end{tabular}
%\end{center}
%\end{table}

\begin{table}[ht]
\begin{center}
\caption{\label{table:constant} Verified bound for interpolation constants over different triangles}

\vskip 0.1cm

\begin{tabular}{|l|c|c|c|c|c|}
\hline
$(a,b)$             & $\lambda_{\text{low}}$ & $\lambda_{\text{upper}}$ & $C_{\text{low}}$ & $C_{\text{upper}}$  \\ \hline
$(0,1)$             &          14.8181         &          15.1101           &       0.2571     &     0.2598    \\ \hline  % 0.2597781472____
$(0,\sqrt{3}/3)$    &          21.4906         &          22.1234           &       0.2125     &     0.2158 \\ \hline % 0.21571258______
$(1/2,\sqrt{3}/2)$  &          31.6764         &          32.2821           &       0.1759     &     0.1777 \\ \hline %0.17767721______
$(-1/2,\sqrt{3}/2)$ &          5.15806         &          5.26263           &       0.4358     &     0.4404 \\ \hline %0.4403079_______
\end{tabular}
\end{center}
\end{table}

\begin{figure}[ht]
\begin{center}
\includegraphics[width=3.2in,angle=0]{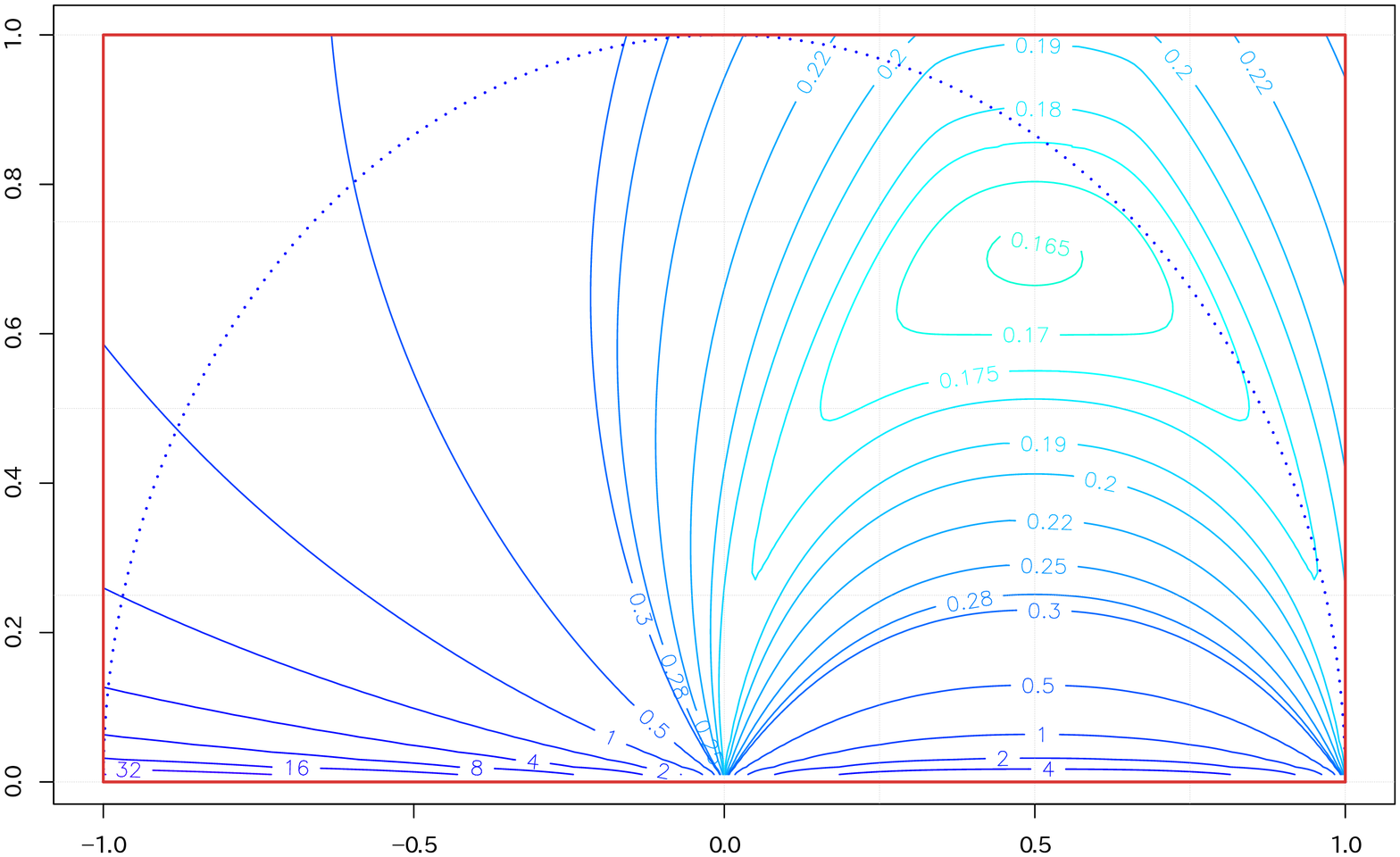}
\caption{\label{fig:P2-C-contour} Contour line of $C_T$ with respect to $(a,b)$ in rectangular $[-1, 1]\times [0.01, 1]$ }
\end{center}
\end{figure}

To investigate the dependency of $C_T$ on the shape of triangle $T$, in \Cref{fig:P2-C-contour},
we draw the contour line for $C_T$ respect to the parameter $(a,b)$.
For this purpose, we choose grid value of $(a,b)$  in $[-1, 1] \times [0.01, 1]$ and evaluate $C_T$ by using finite element method.
As we can see from the numerical results, for triangle with maximum angle near to $\pi$, the value of $C_T$ will be large.
Thus, in finite element computing, such kind of elements are not recommended.

An interesting thing about the numerical results is that the approximate eigenvalue $\lambda_{1,h}$ itself works as the
lower bound for $\lambda_1$, which is due to the usages of the nonconforming FEM.
As shown by asymptotic analysis, if the mesh size is small enough, the approximate eigenvalues from many non-conforming FEMs
give lower bounds for the exact eigenvalues; see, e.g., \cite{yang2010eigenvalue,Luo2012}. However, it is usually difficult to verify
when the mesh is ``small enough".

\section{Summary and future work}\label{sec:summary}
In this paper, we provide explicit bound for the interpolation error constant appearing in the second order Lagrange interpolation function for 
$u\in H^{2}(T)$.

The method proposed here can be applied to bounding error constants of general interpolation error estimation. 
For example, to estimate the constant $C_{T,i}$ in \cref{eq:error-est-pi2-known}, 
instead of considering $u$ directly, let us introduce $v=u_{x}$ and $w=u_{y}$ along with the constraint
condition $v_{y}=w_{x}$. The Fujino-Morely FEM can be used to approximate $v$ and $w$. 
For $i=1,2$ in \cref{eq:error-est-pi2-known}, by solving the corresponding eigenvalue problem for $v$ and $w$, 
it is possible to have optimal lower bound for the first eigenvalue and hence the upper bound of $C_{T,i}$. 
In case of $i=0$, a rough bound of $C_{T,0}$ can be obtained 
by considering the estimations $\|u-\Pi_{2}u\|_{0} \le \widehat{C} |u-\Pi_{2}u|_{1}$ and $|u-\Pi_{2}u|_{1} \le C_{T,1} |u|_{3}$, where 
$\widehat{C}$ is a constant to be evaluated by solving an eigenvalue problem of the Laplacian. \\

As an objective, in the near future,  we plan to create a table with bounds for various interpolation error constants.

\section*{Acknowledgement}

This research is supported by SPS KAKENHI（Grants-in-Aid for Scientific Research) Grant Number 26800090, 16H03950 and 26400194 from Japan
Society for the Promotion of Science (JSPS).

\bibliographystyle{elsarticle-num}
\bibliography{library}

\begin{thebibliography}{10}
\expandafter\ifx\csname url\endcsname\relax
  \def\url#1{\texttt{#1}}\fi
\expandafter\ifx\csname urlprefix\endcsname\relax\def\urlprefix{URL }\fi
\expandafter\ifx\csname href\endcsname\relax
  \def\href#1#2{#2} \def\path#1{#1}\fi

\bibitem{Ciarlet2002}
P.~G. Ciarlet, The finite element method for elliptic problems, Vol.~40 of
  Classics Appl. Math., SIAM, 2002.

\bibitem{Brenner+Scott2008}
S.~C. Brenner, L.~R. Scott, The Mathematical Theory of Finite Element Methods,
  3rd edition, Springer, 2008.

\bibitem{Natterer1975}
F.~Natterer, {Berechenbare Fehlerschranken f{\"{y}}r die Methode der finite
  Elemente}, Int. Ser. Numer. Math. 8 (1975) 109--121.

\bibitem{R.Lehmann1986}
R.~Lehmann, {Computable error bounds in finite-element method}, IMA J Numer.
  Anal. 6 (1986) 265--271.

\bibitem{liu-kikuchi-2010}
X.~Liu, F.~Kikuchi, {Analysis and Estimation of Error Constants for {$P_0$} and
  {$P_1$} Interpolations over Triangular Finite Elements}, J. Math. Sci. Univ.
  Tokyo 17 (2010) 27--78.

\bibitem{Kikuchi+Liu2007}
F.~Kikuchi, X.~Liu, {Estimation of interpolation error constants for the
  {$P_0$} and {$P_1$} triangular finite element}, Comput. Methods Appl. Mech.
  Eng. 196 (2007) 3750--3758.

\bibitem{Kobayashi2011}
K.~Kobayashi, {On the interpolation constants over triangular elements (in
  {J}apanese)}, Kyoto Univ. Res. Inf. Repos. 1733 (2011) 58--77.

\bibitem{kobyashi-2015}
K.~Kobayashi, On the interpolation constants over triangular elements, in:
  J.~Brandts, S.~Korotov, M.~Kr{\i}zek, K.~Segeth, J.~{\v{S}}{\i}stek,
  T.~Vejchodsk{\`y} (Eds.), Proceedings of the International Conference
  Applications of Mathemacs 2015, Institute of Mathematics Czech Academy of
  Sciences, Prague, 2015, pp. 110--124.

\bibitem{jamet1976estimations}
P.~Jamet, {Estimations d'erreur pour des {{\'{e}}}l{{\'{e}}}ments finis droits
  presque d{{\'{e}}}g{{\'{e}}}n{{\'{e}}}r{{\'{e}}}s}, Rev. fran{{\c{c}}}aise
  d'automatique, informatique, Rech. op{{\'{e}}}rationnelle. Anal.
  num{{\'{e}}}rique 10~(1) (1976) 43--60.

\bibitem{kobayashi2015priori}
K.~Kobayashi, T.~Tsuchiya, {A priori error estimates for Lagrange interpolation
  on triangles}, Appl. Math. 60~(5) (2015) 485--499.

\bibitem{Liu-2015}
X.~F. Liu, A framework of verified eigenvalue bounds for self-adjoint
  differential operators, Appl. Math. Comput. 267 (2015) 341--355.

\bibitem{morley1968triangular}
L.~S.~D. Morley, {The triangular equilibrium element in the solution of plate
  bending problems}, Aero. Quart 19 (1968) 149--169.

\bibitem{Fujino1971}
T.~Fujino, {Recent advances in matrix methods of structural analysis and
  design}, in: J.~T. {Gallagher, Richard H and Yamada, Yoshiaki and Oden}
  (Ed.), Recent Adv. matrix methods Struct. Anal. Des., The University of
  Alabama Press, 1971, pp. 725--786.

\bibitem{liu-2013-2}
X.~Liu, S.~Oishi, {Guaranteed high-precision estimation for {$P_0$}
  interpolation constants on triangular finite elements}, Jpn. J. Ind. Appl.
  Math. 30 (2013) 635--652.
\newblock \href {http://dx.doi.org/10.1007/s13160-013-0120-6}
  {\path{doi:10.1007/s13160-013-0120-6}}.

\bibitem{Behnke-1991}
H.~Behnke, The calculation of guaranteed bounds for eigenvalues using
  complementary variational principles, Computing 47~(1) (1991) 11--27.

\bibitem{Rump-1999}
S.~Rump, Intlab-interval laboratory, in: T.~Csendes (Ed.), Developments in
  Reliable Computing, Kluwer Academic Publishers, Dordrecht, 1999, pp. 77--104.

\bibitem{yang2010eigenvalue}
Y.~Yang, Z.~Zhang, F.~Lin, Eigenvalue approximation from below using
  non-conforming finite elements, Sci. China Math. 53~(1) (2010) 137--150.

\bibitem{Luo2012}
F.~Luo, Q.~Lin, H.~Xie, {Computing the lower and upper bounds of Laplace
  eigenvalue problem: by combining conforming and nonconforming finite element
  methods}, Sci. China Math. 55~(5) (2012) 1069--1082.
\newblock \href {http://dx.doi.org/10.1007/s11425-012-4382-2}
  {\path{doi:10.1007/s11425-012-4382-2}}.

\end{thebibliography}

%% else use the following coding to input the bibitems directly in the
%% TeX file.

\end{document}